\title{\LARGE \bf Optimal Design of Networks of Positive Linear Systems\\under
Stochastic Uncertainty}
\author{Masaki Ogura and Victor M.~Preciado
\thanks{The authors are with the Department of Electrical and Systems
Engineering, University of Pennsylvania, Philadelphia, PA 19014, USA.
Email:  {\tt\small \{ogura,preciado\}@seas.upenn.edu}}%
\thanks{This work was supported in part by the NSF under grants CNS-1302222 and IIS-1447470.}%
}
\newtheorem{definition}{Definition}[section]
\newtheorem{lemma}[definition]{Lemma}
\newtheorem{proposition}[definition]{Proposition}
\newtheorem{theorem}[definition]{Theorem}
\newtheorem{remark}[definition]{Remark}
\newtheorem{problem}[definition]{Problem}
\newcommand{\ubar}[1]{\underaccent{\bar}{#1}}
\DeclareMathOperator*{\col}{col}
\renewcommand{\Pr}{P}
\DeclareSymbolFont{bbold}{U}{bbold}{m}{n}
\DeclareSymbolFontAlphabet{\mathbbold}{bbold}
\DeclareMathAlphabet{\pazocal}{OMS}{zplm}{m}{n}
\renewcommand{\mathcal}[1]{\pazocal{#1}}
\DeclareMathOperator*{\minimize}{minimize}
\DeclareMathOperator*{\subjectto}{subject\ to}
\DeclareMathOperator{\diag}{diag}
\DeclarePairedDelimiter\norm{\lVert}{\rVert}
\newcommand{\Norm}[1]{\left\lVert #1 \right\rVert}
\DeclareMathOperator{\Var}{Var}
\DeclareMathOperator{\esssup}{ess\,sup}
\newenvironment{proofof}[1]{
\begin{proof}}{\end{proof}
}
\begin{document}

\maketitle
\thispagestyle{empty}
\pagestyle{empty}

\begin{abstract}
In this paper, we study networks of positive linear systems subject to time-invariant and random uncertainties. We present linear matrix inequalities for checking the stability of the whole network around the origin with prescribed probability and decay rate. Based on this condition, we then give an efficient method, based on geometric programming, to find the optimal parameters of the probability distribution describing the uncertainty. We illustrate our results by analyzing the stability of a viral spreading process in the presence of random uncertainties.
\end{abstract}

\section{Introduction}

Stability analysis of uncertain dynamical systems or, precisely speaking,
dynamical systems with tine-invariant and uncertain parameters, has attracted
extensive attention for a long time in systems and control theory. In
particular, for the stability analysis of uncertain linear systems, there have
been proposed various tools including generalized Kharitonov's
theorem~\cite{Barmish1989}, common quadratic Lyapunov
functions~\cite{Bernussou1989}, parameter-dependent quadratic lyapunov
functions~\cite{Gahinet1996,Ramos2002}, and parameter-dependent polynomial
Lyapunov functions~\cite{DeOliveira1999,Oliveira2006,Lavaei2008}. For positive
linear systems, which are the class of linear systems whose state variable is
nonnegative entrywise provided the initial state is, the authors in
\cite{Briat2012c} and \cite{Colombino2015} propose robust stability conditions
based on diagonal Lyapunov functions~\cite{Shorten2009} and linear storage
functions, respectively.

All the works mentioned above model the uncertainty of a linear system by
providing a set of all possible configurations of the system. This is done
typically via polytopes where coefficient matrices of the system can belong to.
We can find in the literature several contribution to determine whether the
uncertain system is stable for all possible configurations or not. An important
consequence is that, in these frameworks, all possible system configurations are
equally important. However, in some applications, we are able to give not only
the set of possible uncertain parameters but also a weight, typically a
probability distribution, that measures the relative importance of the elements
in the uncertainty set.

One of the earliest works along this line is presented in~\cite{Spencer1994},
where the authors analyze the stability of linear time-invariant systems whose
coefficient matrices are modeled via a random vector. Using first- and
second-order reliability methods~\cite{Zhao1999}, the authors analyze the
stability of specific mechanical systems, and also study the probability of the
system being stable with respect to physical parameters, such as stiffness and
damping ratio. They also compare various control strategies such as those based
on linear quadratic regulator and Kalman filters. However, their results are
limited to the analysis of low-order systems and cannot be easily generalized to
the case of possibly large networks composed by uncertain systems.

In this paper, we present a robust stability analysis for networks of positive
linear systems subject to time-invariant and random uncertainty. We first
present linear matrix inequalities for checking if a given network of uncertain
positive linear systems is stable with a given probability and a decay rate.
Based on this result, we then present a convex optimization problem, posed as a
geometric program~\cite{Boyd2007}, for optimally designing the parameters of the
probability distribution expressing the uncertainty. We illustrate the obtained
results using a networked susceptible-infected-susceptible epidemic
model~\cite{VanMieghem2009a}, which has found applications in, for example,
public health~\cite{Pastor-Satorras2015}, malware spreading~\cite{Garetto2003},
and information propagation over socio-technical networks~\cite{Lerman2010}. The
results in this paper are based on the probabilistic estimate~\cite{Chung2011}
for the maximum real eigenvalue of random and symmetric matrices.

This paper is organized as follows. After introducing necessary notations, in
Section~\ref{sec:problemFormulation} we introduce our model of the network of
uncertain positive linear systems and then state the problems to be studied. We
then propose a convex optimization framework for analyzing the stability of
random networks in Section~\ref{sec:analysis}. Based on this analysis, we study
the optimal design of the probability distributions describing uncertainties in
Section~\ref{sec:design}. Numerical examples are presented in
Section~\ref{sec:example}.

\subsection{Mathematical Preliminaries} 

We denote by $\mathbb{R}$ the set of real numbers. The set~$\{1,\ldots,N\}$ is denoted by $[N]$. For vectors $x, y\in \mathbb{R}^N$, we write $x\geq
y$ ($x>y$) if $x_i \geq y_i$ ($x_i > y_i$, respectively) for every $i\in [N]$.
We say that $x$ is positive if $x>0$.  We denote the identity matrix by $I$. A
square matrix is said to be Metzler if its off-diagonal entries are nonnegative.
The Kronecker product of the matrices $A$ and $B$ is denoted by~$A\otimes B$.
The direct sum of the matrices~$A_1$, $\dotsc$, $A_N$, denoted
by~$\bigoplus_{i=1}^N A_i$, is defined as the block-diagonal matrix containing
the matrices ~$A_1$, $\dotsc$, $A_N$ as its diagonal blocks. When a symmetric
matrix~$A$ is positive semi-definite, we write $A\succeq 0$. For another
symmetric matrix~$B$, we write $A\succeq B$ if $A-B \succeq 0$. The notations
$A\succ B$ and $A\preceq B$ are defined in the obvious way. If $A \succeq 0$,
then $\sqrt A$ denotes a (not necessarily unique) matrix such that $A =
{\sqrt{A}} (\sqrt A)^\top$ holds. For a random matrix~$M$, we denote its
expectation by $E[M]$. The variance of $M$ is given by $\Var(M) =
E[(M-E[M])^2]$. Also we define the positive semi-definite matrix
\begin{equation*}
W(M) = E[M^\top M] - E[M]^\top E[M]. 
\end{equation*}
The symbol $\star$ is used to denote the symmetric blocks of partitioned
symmetric matrices.

The design framework proposed in this paper depends on a class of optimization
problems called geometric programs~\cite{Boyd2007}. Let $x_1$, $\dotsc$, $x_n$
denote positive variables and define ${x} = (x_1, \dotsc, x_n)$. We say that a
real-valued function $g({x})$ is a {\it monomial function} if there exist $c
\geq 0$ and $a_1, \dotsc, a_n \in \mathbb{R}$ such that $g({x}) = c
x_{\mathstrut 1}^{a_{1}} \dotsm x_{\mathstrut n}^{a_n}$. Also we say that a
function~$f({x})$ is a {\it posynomial function} if it is a sum of monomial
functions of ${x}$. For information about the modeling power of posynomial functions, we
point the readers to~\cite{Boyd2007}. Given a collection of posynomial functions
$f_0{(x)}$, $\dotsc$, $f_p{( x)}$ and monomials $g_1{(x)}$, $\dotsc$,
$g_q{(x)}$, the optimization problem
\begin{equation*} 
\begin{aligned}
\minimize_{ x}\ 
&
f_0({ x})
\\
\subjectto\ 
&
f_i({ x})\leq 1,\quad i=1, \dotsc, p, 
\\
&
g_j({ x}) = 1,\quad j=1, \dotsc, q, 
\end{aligned}
\end{equation*}
is called a {\it geometric program}. Although geometric programs are not convex,
they can be efficiently converted into a convex optimization problem.

\section{Problem formulation}\label{sec:problemFormulation}

In this section, we introduce the model of the network of linear systems with
stochastic uncertainty. We then state the problems studied in this paper.
Consider linear time-invariant and random systems
\begin{equation}\label{eq:Sigmai}
\Sigma_i : 
\frac{dx_i}{dt}  =  A_{ii} x_i + \sum_{j\neq i} A_{ij}x_j,\quad i=1, \dotsc, N, 
\end{equation}
where $A_{ij}$ is an $\mathbb{R}^{n\times n}$-valued random matrix for all $i, j
\in [N]$. We emphasize that $\Sigma_i$ is a linear time-invariant system for
each realization of the random matrices $A_{ij}$. In other words, the
coefficient matrices of $\Sigma_i$ do not change over time once they are chosen
from the corresponding distributions.

If we introduce the notations
\begin{equation*}
\begin{gathered}
x = \begin{bmatrix}
x_1 \\ \vdots \\ x_N
\end{bmatrix},\ A = \begin{bmatrix}
A_{11} & \cdots & A_{1N}
\\
\vdots & \ddots & \vdots
\\
A_{N1} & \cdots & A_{NN}
\\
\end{bmatrix}, 
\end{gathered}
\end{equation*}
then, from \eqref{eq:Sigmai}, we obtain the random linear time-invariant system
\begin{equation*}
\Sigma : 
\frac{dx}{dt} = Ax. 
\end{equation*}
Following the deterministic case~\cite{Farina2000}, we say that $\Sigma$ is \emph{positive} if $x(0) = x_0 \geq 0$ implies $x(t)\geq 0$ for every $t\geq 0$ with probability one. 

The first problem we address in this paper is the following stability
analysis problem with a prescribed decay rate and an unreliability level:

\begin{problem}[Stability analysis]
Given a desired decay rate $\lambda > 0$ and an unreliability level~$\epsilon\in
(0, 1]$, determine if, with probability at least $1-\epsilon$, the system
$\Sigma$ is stable with decay rate $\lambda$.
\end{problem}

We also investigate design problems. We assume that, though we cannot tune the
values of the random variables~$A_{ij}$ directly, we can still design their
probability distributions. Specifically, we assume that the probability
distributions of the random matrices $A_{ij}$ are parametrized by scalar
parameters $r_1$, $\dotsc$, $r_m$ that we can design. Our design problem is
based on cost functions and constraints. We suppose that there exists a function
$R(r, \epsilon)$ that represents the cost for realizing the specific parameter
$r$ and from  allowing the unreliability level~$\epsilon$. Also, for functions
$\theta_1$, $\dotsc$, $\theta_p$, $\phi_1$, $\dotsc$, $\phi_q$ of $r$ and
$\epsilon$, we allow the constraints on $r$ and $\epsilon$ of the form $f_k(r,
\epsilon)\leq 1$ ($k=1, \dotsc, p$) and $g_\ell(r, \epsilon) = 1$ ($\ell = 1,
\dotsc, q$). Now we can formulate the design problems studied in this paper:

\begin{problem}[Optimal design]
\label{prb:design:stbl} Given a desired decay rate $\lambda > 0$, a cost bound
$\bar R > 0$, and an unreliability level $\epsilon \in (0, 1]$, find the
parameter~$r$ such that the following conditions hold:
\begin{itemize}
\item With probability at least $1-\epsilon$, the system $\Sigma$ is stable with
decay rate $\lambda$;

\item The constraints $R(r, \epsilon) \leq \bar R$, $f_k(r,
\epsilon) \leq 1$ ($k\in [p]$), and $g_\ell(r, \epsilon) = 1$ ($\ell \in [q]$)
are satisfied.
\end{itemize} 
\end{problem}

For solving the above stated problems, we place one of the following
assumptions reflecting the networked-structure of the system $\Sigma$:
\begin{enumerate}
\item[A1)] The random variables $\{A_{ij}\}_{i, j\in [N]}$ are independent;

\item[A2)] The systems $\{\Sigma_i\}_{i\in [N]}$, i.e., the sets of random
variables $\{A_{i1}, \dotsc, A_{iN}\}_{i\in [N]}$, are independent.
\end{enumerate}

Finally, to the networked system $\Sigma$, we associate a directed graph
$(\mathcal V, \mathcal E)$ with $\mathcal V = [N]$ as follows. An ordered
pair~$(i, j)$, called a directed edge, is in $\mathcal E$ if $A_{ij}$ is not the
zero random variable.  We define the closed neighborhoods of $i$ by ${\mathcal
N}^-[i] = \{j\in[N] : (i, j) \in \mathcal E\}$ and ${\mathcal N}^+[i] =
\{j\in[N] : (j, i) \in \mathcal E\}$, respectively.

\section{Stability Analysis}\label{sec:analysis}

In this section, we present the solutions of the stability analysis problem. We
first state the results in Subsection~\ref{subsec:conditions}. The proof of the results is then presented in Subsection~\ref{subsec:proof}. 

\subsection{Stability Conditions}\label{subsec:conditions}

Throughout the paper, for an unreliability level~$\epsilon \in (0, 1]$, we define
\begin{equation*}
\rho= \log(nN/\epsilon).
\end{equation*}
The next theorem gives a solution for the stability analysis problem under
condition A1):

\begin{theorem}\label{thm:stabilityAnalysis:iid}
Suppose that A1) holds true. Let $\lambda> 0$ and $\epsilon \in (0, 1]$. Assume
that there exist positive numbers $p_1$, $\dotsc$, $p_N$, $a$, $\Delta$, and
$\sigma$ satisfying the linear matrix inequalities:
\begin{subequations}
\label{eq:stbl:analysis:iid}
\begin{gather}
E[A]^\top P + P E[A] + aI + \lambda P  \preceq 0, 
\label{eq:lmi:main}
\\
\label{eq:LMI:aDeltarho}
\begin{bmatrix}
a-\frac{\rho}{3}\Delta & \sqrt{2\rho} \sigma & \frac{\rho}{3}\Delta
\\
\star  & a-\frac{\rho}{3}\Delta & 0
\\
\star & \star& a-\frac{\rho}{3}\Delta 
\end{bmatrix}
\succ 0,
\\
p_i \esssup \norm{A_{ij} - E[A_{ij}]} \leq \Delta,
\label{eq:lmi:M:iid}
\\
\begin{bmatrix}
\sigma I &  p_i Q_i & R_i
\\
\star & \sigma I & O
\\
\star& \star& \sigma I
\end{bmatrix} \succeq 0, 
\label{eq:lmi:v:iid}
\end{gather}
\end{subequations}
where $P = \bigoplus_{i=1}^N (p_i I_n)$, and $Q_i, R_i$ are given for each $i\in [N]$ by
\begin{equation}\label{eq:defn:QiRi}
Q_i = \sqrt{\mathstrut\smash{\sum_{j \in \mathcal N^+[i]}} W(A_{ij}^\top)},\quad 
R_i = \col\bigl(p_j
\sqrt{\mathstrut W(A_{\smash{ji}})}\bigr)_{j \in \mathcal N^-[i]},
\vspace{.1cm}
\end{equation}
with $\col(\cdot)$ denoting the column vector obtained by stacking its
arguments. Then, with probability at least $1-\epsilon$, the system $\Sigma$ is
stable with decay rate $\lambda$. Moreover, if linear matrix inequalities
\eqref{eq:stbl:analysis:iid} are solvable for an $\epsilon = \epsilon_1$, then
so are for every $\epsilon \geq \epsilon_1$.
\end{theorem}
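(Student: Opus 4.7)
My plan is to certify stability via a block-scalar quadratic Lyapunov function $V(x) = x^\top P x$, with $P = \bigoplus_{i=1}^N p_i I_n$, and to separate the deterministic mean of $A$ from its stochastic perturbation. For any realization of $A$, the Lyapunov inequality $\dot V \leq -\lambda V$ is implied by $A^\top P + PA + \lambda P \preceq 0$. Writing $\tilde A := A - E[A]$, this matrix decomposes as
\begin{equation*}
\bigl(E[A]^\top P + P E[A] + \lambda P\bigr) + M, \qquad M := \tilde A^\top P + P\tilde A,
\end{equation*}
whose first summand is $\preceq -aI$ by \eqref{eq:lmi:main}. The whole proof therefore reduces to the concentration statement $\Pr[\lambda_{\max}(M) \leq a] \geq 1-\epsilon$.

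To handle $M$, I would use block selectors $E_i$ satisfying $E_i^\top E_j = \delta_{ij} I_n$ and $P = \sum_i p_i E_i E_i^\top$, and write
\begin{equation*}
M = \sum_{i, j \in [N]} X_{ij}, \qquad X_{ij} := p_i E_i \tilde A_{ij} E_j^\top + p_i E_j \tilde A_{ij}^\top E_i^\top.
\end{equation*}
Each $X_{ij}$ is symmetric and zero-mean, and A1) makes the family $\{X_{ij}\}$ mutually independent. The matrix Bernstein inequality of \cite{Chung2011}, applied to this sum of $Nn$-dimensional summands, gives
\begin{equation*}
\Pr\!\bigl[\lambda_{\max}(M) \geq a\bigr] \leq Nn \exp\!\left(-\frac{a^2/2}{v + Ra/3}\right)
\end{equation*}
whenever $\|X_{ij}\| \leq R$ almost surely and $\|\sum_{i,j} E[X_{ij}^2]\| \leq v$. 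A routine double Schur complement shows that \eqref{eq:LMI:aDeltarho} is equivalent to $a > \rho\Delta/3$ together with $a^2 \geq 2\rho(\sigma^2 + \Delta a/3)$, which with $R = \Delta$ and $v = \sigma^2$ is exactly what is needed to bound the above expression by $\epsilon = Nn\, e^{-\rho}$.

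It remains to justify those two parameter identifications from the remaining LMIs. The almost-sure bound $\|X_{ij}\| \leq p_i \esssup \|\tilde A_{ij}\| \leq \Delta$ is immediate from \eqref{eq:lmi:M:iid}. For the variance, a block-by-block computation using $W(A_{ij}^\top) = E[\tilde A_{ij}\tilde A_{ij}^\top]$ gives $E[X_{ij}^2] = p_i^2 E_i W(A_{ij}^\top) E_i^\top + p_i^2 E_j W(A_{ij}) E_j^\top$, so $\sum_{i,j} E[X_{ij}^2]$ is block-diagonal, and its $(i,i)$-block matches, up to the graph-support conventions encoded in $\mathcal N^\pm[i]$, the expression $p_i^2 Q_i Q_i^\top + R_i R_i^\top$; two successive Schur complements on \eqref{eq:lmi:v:iid} produce precisely the bound $\sigma^2 I \succeq p_i^2 Q_i Q_i^\top + R_i R_i^\top$ for each $i$, hence $v \leq \sigma^2$. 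The monotonicity claim then follows because $\rho = \log(Nn/\epsilon)$ is decreasing in $\epsilon$, which enlarges the diagonal entries and shrinks the off-diagonal entries of \eqref{eq:LMI:aDeltarho}, while \eqref{eq:lmi:main}, \eqref{eq:lmi:M:iid}, and \eqref{eq:lmi:v:iid} are $\epsilon$-free. The main technical obstacle I anticipate is the careful bookkeeping in this variance step: the diagonal summand $X_{ii} = p_i E_i(\tilde A_{ii} + \tilde A_{ii}^\top) E_i^\top$ nominally loses a factor of two in the almost-sure bound (suggesting a preliminary symmetrization or separate treatment of the diagonal blocks), and the sums indexed by $\mathcal N^+[i]$ and $\mathcal N^-[i]$ in \eqref{eq:defn:QiRi} must be matched precisely to the support pattern of the nonzero $W(A_{ij})$'s appearing in the decomposition.
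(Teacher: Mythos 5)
Your proposal follows essentially the same route as the paper's own proof: the identical decomposition $A^\top P + PA + \lambda P = (E[A]^\top P + PE[A] + \lambda P) + \sum_{i,j} X_{ij}$ with $X_{ij} = p_i(U_{ji}\otimes \tilde A_{ij}^\top + U_{ij}\otimes \tilde A_{ij})$ (your block selectors $E_i(\cdot)E_j^\top$ are just the Kronecker terms $U_{ij}\otimes(\cdot)$), the same appeal to the Chung--Radcliffe matrix concentration bound with $R=\Delta$ and $v=\sigma^2$, the same Schur-complement reading of \eqref{eq:LMI:aDeltarho} (this is exactly the paper's Lemma~\ref{lem:key}) and of \eqref{eq:lmi:v:iid}, and the same monotonicity-in-$\rho$ argument for the final claim. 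The factor-of-two subtlety you flag for the diagonal summands $X_{ii}$ (both in the almost-sure norm bound and in the variance) is a genuine issue that the paper's proof silently elides by treating the $i=j$ case as if the two Kronecker terms occupied distinct off-diagonal blocks, so your instinct to symmetrize or handle those blocks separately is well placed.
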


Several remarks on Theorem~\ref{thm:stabilityAnalysis} are in order. Though
feasibility of linear matrix inequalities~\eqref{eq:stbl:analysis:iid} implies
the stability of the \emph{averaged} system~$dx/dt = E[A] x$ with exponential
convergence rate~$\lambda$ due to \eqref{eq:lmi:main}, the converse is not
necessarily true mainly by the additional term $aI$ in the left hand side of
\eqref{eq:lmi:main}. Also notice that the coefficient $a$ of the additional is
related through \eqref{eq:LMI:aDeltarho} to $\Delta$ and $\sigma$, which
quantify the variability of the random system $\Sigma$ as can be seen from the
proof of the theorem.  Finally, the last claim of the theorem implies that a
bisection search effectively finds the minimum unreliability level given by
\begin{equation}\label{eq:minimumUnreliability}
\epsilon^\star = \inf\{ \epsilon>0 :
\text{\eqref{eq:stbl:analysis:iid} is solvable} \}.
\end{equation}

\newcommand{\clinnhd}[1]{\mathcal N_{#1}^-}
\newcommand{\cloutnhd}[1]{\mathcal N_{#1}^+}

\newcommand{\inlinebm}[1]{\setlength\arraycolsep{2pt}\begin{bmatrix}#1\end{bmatrix}}

Then we consider the condition A2). For each $i\in [N]$, define the
$\mathbb{R}^{n\times (nN)}$-valued random matrix $A_i = \inlinebm{A_{i1}
&\cdots&A_{iN}}$. Then, define the $\mathbb{R}^{(nN)\times (nN)}$-valued random
matrix $S_i = e_i^\top \otimes A_i^\top +  e_i \otimes A_i$ for each $i\in [N]$,
where $e_i$ denotes the $i$th standard unit vector in $\mathbb{R}^N$. Then, the
next theorem gives a solution to the stability analysis problem under condition
A2):

\begin{theorem}\label{thm:stabilityAnalysis}
Suppose that A2) holds true. Let $\lambda > 0$ and $\epsilon \in (0, 1]$. Assume
that there exist positive numbers $p_1$, $\dotsc$, $p_N$, $a$, $\sigma$, and
$\Delta$ satisfying the linear matrix inequalities:
\begin{subequations}
\label{eq:stbl:analysis}
\begin{gather}
\text{\eqref{eq:lmi:main} and \eqref{eq:LMI:aDeltarho}},
\\
2 p_i \esssup \norm{A_i - E[A_i]} \leq \Delta,
\label{eq:lmi:M}
\\
\begin{bmatrix}
\sigma  I & p_1 \sqrt{\Var(S_1)} & \cdots & p_N \sqrt{\Var(S_N)}
\\
\star & \sigma I &  & 
\\
\vdots & & \ddots
\\
\star& & & \sigma I
\end{bmatrix} \succeq 0. 
\label{eq:lmi:v}
\end{gather}
\end{subequations}
Then, with probability at least $1-\epsilon$, the system $\Sigma$ is stable with
decay rate $\lambda$. Moreover, if linear matrix inequalities
\eqref{eq:stbl:analysis} are solvable for an $\epsilon = \epsilon_1$, then so
are for every $\epsilon \geq \epsilon_1$.
\end{theorem}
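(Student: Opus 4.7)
The proof will follow the blueprint of the proof of Theorem~\ref{thm:stabilityAnalysis:iid}, adapting the independence structure used for the concentration step from per-entry (A1) to per-row (A2). I take the quadratic Lyapunov candidate $V(x)=x^\top P x$ with $P=\bigoplus_{i=1}^N p_i I_n$ and study the matrix $A^\top P+PA+\lambda P$, which I split as
\begin{equation*}
\bigl(E[A]^\top P+P E[A]+\lambda P\bigr)+\bigl((A-E[A])^\top P+P(A-E[A])\bigr).
\end{equation*}
By \eqref{eq:lmi:main} the deterministic part is bounded above by $-aI$, so to conclude stability of the realization with decay rate $\lambda$ it suffices to show that the zero-mean random matrix $M_0:=(A-E[A])^\top P+P(A-E[A])$ satisfies $\lambda_{\max}(M_0)\le a$ with probability at least $1-\epsilon$: that will give $A^\top P+PA+\lambda P\preceq 0$ for that realization, and the exponential decay then follows by the usual Lyapunov argument.

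The pivotal algebraic observation is that, because $P$ is block-diagonal with $p_i I_n$ on block $i$, one has $PA+A^\top P=\sum_{i=1}^N p_i S_i$ with $S_i=e_i\otimes A_i+e_i^\top\otimes A_i^\top$ exactly as defined before the theorem statement. Consequently $M_0=\sum_{i=1}^N p_i(S_i-E[S_i])$ is a sum of zero-mean symmetric random matrices, and under A2) these summands are \emph{independent} since $S_i$ depends only on the row $A_i=[A_{i1}\ \cdots\ A_{iN}]$. I then invoke the matrix Bernstein-type tail bound from \cite{Chung2011}: for independent, mean-zero, symmetric $d\times d$ matrices $X_i$ with $\|X_i\|\le M$ and $\bigl\|\sum_i E[X_i^2]\bigr\|\le\varsigma^2$, one has $\Pr\bigl(\lambda_{\max}(\sum_i X_i)>t\bigr)\le d\exp\bigl(-t^2/(2\varsigma^2+2Mt/3)\bigr)$, applied with $d=nN$, $X_i=p_i(S_i-E[S_i])$, $t=a$, $M=\Delta$, and $\varsigma^2=\sigma^2$.

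It remains to verify that the three remaining LMIs encode exactly the hypotheses and the desired conclusion of that inequality. Using $\|e_i\otimes A_i\|=\|A_i\|$ and the triangle inequality gives $\|S_i-E[S_i]\|\le 2\|A_i-E[A_i]\|$; combined with \eqref{eq:lmi:M} this yields the almost-sure bound $\|p_i(S_i-E[S_i])\|\le\Delta$. Applying a Schur complement to \eqref{eq:lmi:v} and using $\sqrt{\Var(S_i)}\sqrt{\Var(S_i)}^{\!\top}=\Var(S_i)$ produces $\sigma^2 I\succeq\sum_{i=1}^N p_i^2\,\Var(S_i)$, i.e., the matrix-variance bound required for $\varsigma^2$. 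Finally, two successive Schur-complement reductions of \eqref{eq:LMI:aDeltarho} yield $(a-\rho\Delta/3)^2>2\rho\sigma^2+(\rho\Delta/3)^2$, which rearranges to $a^2>2\rho\sigma^2+(2\rho/3)a\Delta$; this is precisely what is needed for the Bernstein exponent to reach $\rho=\log(nN/\epsilon)$ and hence for the tail to fall below $\epsilon$. The monotonicity statement is immediate because only \eqref{eq:LMI:aDeltarho} depends on $\epsilon$, and its feasible set enlarges as $\rho$ decreases.

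The main obstacle will be cleanly establishing the Kronecker identity $PA+A^\top P=\sum_i p_i S_i$ and the operator-norm estimate $\|e_i\otimes A_i\|=\|A_i\|$; once these are in place, the remaining work is essentially Schur-complement bookkeeping that maps each LMI to its counterpart in the matrix Bernstein inequality from \cite{Chung2011}.
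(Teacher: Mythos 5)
Your proposal is correct and follows essentially the same route as the paper: the identity $PA+A^\top P=\sum_{i=1}^N p_i S_i$, the application of the Chung--Radcliffe matrix concentration bound to the independent summands $p_i S_i$ with $\Delta$ and $\sigma^2$ supplied by \eqref{eq:lmi:M} and the Schur complement of \eqref{eq:lmi:v}, and the reduction of \eqref{eq:LMI:aDeltarho} to $a^2>2\rho\sigma^2+\tfrac{2}{3}\rho\Delta a$ are exactly the steps in the paper's proof (the last one via Lemma~\ref{lem:key}). Your explicit centering of the summands and separation of the deterministic part bounded by $-aI$ is only a cosmetic reformulation of the paper's direct use of Proposition~\ref{prop:Chung} on the uncentered sum, and your monotonicity argument matches the paper's appeal to the last claim of Lemma~\ref{lem:key}.
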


\begin{remark}
By the structure of matrix $S_i$ and the definition of the
graph~$\mathcal V$, the positive semi-definite matrix~$\Var(S_i)$ has rank at
most $nd^-[i]$. Therefore, we can take the square root $\sqrt{\Var(S_i)}$ having at most $n d^-[i]$ rows. 
\end{remark}

\subsection{Proof} \label{subsec:proof}

For the proof of Theorems~\ref{thm:stabilityAnalysis:iid}
and~\ref{thm:stabilityAnalysis}, we recall the following probabilistic estimate
on the maximum eigenvalue of the  sum of random and symmetric matrices:

\begin{proposition}[\cite{Chung2011}]\label{prop:Chung} 
For two positive constants $\Delta$ and $\sigma$, define the function
\begin{equation*}
\kappa_{\Delta, \sigma^2}(a) = n \exp\left(
- \frac{a^2}{2\sigma^2 + \frac{2\Delta a}{3}}
\right),\ a\geq 0.
\end{equation*}
Let $X_1$, $\dotsc$, $X_N$ be independent random $n\times n$ symmetric matrices.
Let $\Delta$ be a nonnegative constant such that $\norm{X_i-E[X_i]} \leq \Delta$
for every $i\in [N]$ with probability one. Also take an arbitrary $\sigma \geq
0$ satisfying $\norm{\sum_{i=1}^N \Var(X_i)}\leq \sigma^2$. Then, the sum $X =
\sum_{i = 1}^N X_i$ satisfies
\begin{equation*}
\Pr\left(\eta(X) \geq \eta(E[X]) + a\right) < \kappa_{\Delta, \sigma^2}(a)
\end{equation*}
for every $a > 0$. 
\end{proposition}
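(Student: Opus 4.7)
The plan is to prove this matrix Bernstein-type bound via the matrix Laplace-transform method. I would first reduce to the centered case by setting $Y_i = X_i - E[X_i]$, so that $Z := \sum_{i=1}^N Y_i = X - E[X]$ is a sum of independent, symmetric, mean-zero random matrices with $\norm{Y_i} \leq \Delta$ almost surely and $\norm{\sum_i E[Y_i^2]} \leq \sigma^2$. By Weyl's inequality, $\eta(X) \leq \eta(E[X]) + \eta(Z)$, so it suffices to show that $\Pr(\eta(Z) \geq a) < \kappa_{\Delta,\sigma^2}(a)$.

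Second, I would apply the matrix Markov step. Since $e^{\theta\eta(Z)} = \eta(e^{\theta Z}) \leq \mathrm{tr}\, e^{\theta Z}$ for any $\theta > 0$ (the trace of a positive semi-definite matrix dominates its largest eigenvalue),
\begin{equation*}
\Pr(\eta(Z) \geq a) \leq e^{-\theta a}\, E[\mathrm{tr}\, e^{\theta Z}].
\end{equation*}
The difficulty is then to control $E[\mathrm{tr}\, e^{\theta Z}]$ despite the non-commutativity of the $Y_i$. This is handled by Lieb's concavity theorem (concavity of $A \mapsto \mathrm{tr}\, \exp(H + \log A)$ on the positive semi-definite cone): conditioning on $Y_1,\dotsc,Y_{N-1}$, applying Jensen's inequality to peel off $Y_N$, and iterating by independence, one obtains
\begin{equation*}
E[\mathrm{tr}\, e^{\theta Z}] \leq \mathrm{tr}\, \exp\Bigl(\sum_{i=1}^N \log E[e^{\theta Y_i}]\Bigr).
\end{equation*}

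Third, I would establish a per-term cumulant bound in the Loewner order: for $0 < \theta < 3/\Delta$,
\begin{equation*}
\log E[e^{\theta Y_i}] \preceq h(\theta)\, E[Y_i^2], \qquad h(\theta) := \frac{\theta^2/2}{1-\theta\Delta/3}.
\end{equation*}
This follows by expanding $e^{\theta Y_i}$ as a power series, using $Y_i^k \preceq \Delta^{k-2} Y_i^2$ for $k \geq 2$ (valid because $\norm{Y_i} \leq \Delta$), summing the resulting geometric series, and invoking $\log(I+A) \preceq A$. Combining this with the variance bound $\norm{\sum_i E[Y_i^2]} \leq \sigma^2$ and the elementary estimate $\mathrm{tr}\, e^M \leq n\, e^{\eta(M)}$ for symmetric $M$, I arrive at
\begin{equation*}
\Pr(\eta(Z) \geq a) \leq n \exp\!\bigl(-\theta a + h(\theta)\, \sigma^2\bigr),
\end{equation*}
and minimizing in $\theta$ with the choice $\theta = a/(\sigma^2 + \Delta a/3)$ reproduces the stated bound $\kappa_{\Delta,\sigma^2}(a)$ after straightforward algebra.

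The main obstacle is the third step: lifting the scalar Bernstein cumulant estimate to the semidefinite order. In the scalar setting one manipulates the moment generating function using elementary convex inequalities, but in the matrix setting the non-commutativity of the $Y_i$ forces one to rely on subtler tools, chiefly Lieb's concavity theorem and the operator-monotonicity of $\log$, together with a careful power-series comparison in Loewner order, so that every inequality in the chain holds in the semidefinite sense rather than merely after taking norms.
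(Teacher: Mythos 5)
Your argument is correct, but note that the paper offers no proof of Proposition~\ref{prop:Chung} at all---it is imported verbatim from \cite{Chung2011}---and what you have written is essentially the standard proof given in that reference (and in Tropp's work it builds on): centering plus Weyl, the matrix Laplace transform, Lieb's concavity theorem to subadditivize the matrix cumulant generating functions, the Bernstein-type bound $\log E[e^{\theta Y_i}] \preceq h(\theta)E[Y_i^2]$, and optimization over $\theta$. The only cosmetic mismatch is that the Markov step yields a non-strict inequality $\leq$ rather than the strict $<$ appearing in the statement; this does not affect how the proposition is used in the proofs of Theorems~\ref{thm:stabilityAnalysis:iid} and~\ref{thm:stabilityAnalysis}.
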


About the function $\kappa_{\Delta, \sigma^2}$ appearing in this proposition, we can prove the following straightforward but yet important lemma: 

\begin{lemma}\label{lem:key}
For all positive numbers $\Delta$, $\sigma$, $a$, and $\epsilon$, the following
statements are equivalent:
\begin{enumerate}
\item \label{item:kappa} $\kappa_{\Delta, \sigma^2}(a) < \epsilon$; 

\item \label{item:gp} $2\rho \Delta a^{-1} + 6\rho \sigma^2 a^{-2} < 3$;

\item \label{item:lmi} The matrix inequality \eqref{eq:LMI:aDeltarho} holds.
\end{enumerate}
Moreover, for fixed $\Delta$, $\sigma$, and $a$, if one of the above equivalent
statements is satisfied by an $\epsilon = \epsilon_1$, then the statements are
satisfied for every $\epsilon \geq \epsilon_1$.
\end{lemma}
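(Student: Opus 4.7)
The plan is to establish the chain $(\ref{item:kappa}) \Leftrightarrow (\ref{item:gp}) \Leftrightarrow (\ref{item:lmi})$ by elementary manipulation and a single Schur-complement reduction, then deduce the monotonicity claim from the monotonicity of $\rho = \log(nN/\epsilon)$ in $\epsilon$.

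For $(\ref{item:kappa}) \Leftrightarrow (\ref{item:gp})$, I would simply take logarithms in $\kappa_{\Delta,\sigma^2}(a) < \epsilon$. Writing out the definition of $\kappa_{\Delta,\sigma^2}$ and rearranging, the inequality becomes
\begin{equation*}
\frac{a^2}{2\sigma^2 + \tfrac{2\Delta a}{3}} > \rho,
\end{equation*}
which, after clearing the denominator (note $2\sigma^2 + 2\Delta a/3 > 0$ since $\Delta, \sigma, a > 0$), dividing by $a^2$, and multiplying by $3$, rearranges to $2\rho \Delta a^{-1} + 6\rho \sigma^2 a^{-2} < 3$. Every step is reversible.

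For $(\ref{item:gp}) \Leftrightarrow (\ref{item:lmi})$, I would introduce $\alpha = a - \tfrac{\rho}{3}\Delta$ and apply the Schur complement to the $3\times 3$ matrix of \eqref{eq:LMI:aDeltarho}, using the lower-right $2\times 2$ block $\alpha I_2$ as the pivot. Positivity of the pivot requires $\alpha > 0$, while the Schur complement reduces to
\begin{equation*}
\alpha - \frac{1}{\alpha}\bigl(2\rho\sigma^2 + \tfrac{\rho^2 \Delta^2}{9}\bigr) > 0,
\end{equation*}
i.e.\ $\alpha^2 > 2\rho\sigma^2 + \rho^2\Delta^2/9$. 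Substituting $\alpha = a - \rho\Delta/3$ and expanding, the quadratic term $\rho^2\Delta^2/9$ cancels on both sides and what remains is $a^2 > \tfrac{2}{3} a \rho \Delta + 2\rho \sigma^2$; dividing by $a^2$ gives exactly (\ref{item:gp}). Conversely, (\ref{item:gp}) forces $2\rho\Delta/a < 3$, hence $\rho\Delta/(3a) < 1/2$, so $\alpha > 0$ is automatic, and the Schur complement direction is reversible. So the chain is complete.

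The monotonicity statement is essentially free: since $\rho = \log(nN/\epsilon)$ is strictly decreasing in $\epsilon$, the left-hand side of (\ref{item:gp}) is strictly decreasing in $\epsilon$ when $\Delta$, $\sigma$, $a$ are held fixed, so validity at $\epsilon_1$ propagates to every $\epsilon \geq \epsilon_1$. I expect no serious obstacle here; the only place that requires care is the bookkeeping in the Schur-complement calculation, specifically making sure that the strict inequality on $\alpha$ follows automatically from (\ref{item:gp}) rather than needing to be imposed separately, so that the three statements are genuinely equivalent and not merely implications.
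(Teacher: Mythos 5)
Your proposal is correct and follows essentially the same route as the paper: logarithms for the equivalence of \ref{item:kappa}) and \ref{item:gp}), a Schur complement for the equivalence of \ref{item:gp}) and \ref{item:lmi}), and the monotonicity of $\rho=\log(nN/\epsilon)$ in $\epsilon$ for the final claim. The only (immaterial) difference is that you pivot on the lower-right $2\times 2$ block rather than the $(1,1)$-entry, and you are in fact slightly more careful than the paper in checking that positivity of the pivot, $a-\tfrac{\rho}{3}\Delta>0$, follows automatically from \ref{item:gp}).
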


\begin{proof}
Taking the logarithm in the both hand sides of the inequality $\kappa_{\Delta,
\sigma^2}(a) < \epsilon$ immediately gives the equivalence
[\ref{item:kappa})~$\Leftrightarrow$~\ref{item:gp})]. Also, the equivalence
[\ref{item:gp})~$\Leftrightarrow$~\ref{item:lmi})] readily follows from taking
the Schur complement of the matrix in the inequality~\eqref{eq:LMI:aDeltarho}
with respect to its $(1, 1)$-entry. Then, the latter claim about the
monotonicity follows from the fact that the left hand side of the inequality in
\ref{item:gp}) is increasing with respect to $\rho$ and therefore decreasing
with respect to $\epsilon$.
\end{proof}

Let us prove Theorem~\ref{thm:stabilityAnalysis:iid}.

\begin{proofof}{Theorem~\ref{thm:stabilityAnalysis:iid}}
Assume that positive numbers $p_1$, $\dotsc$, $p_N$, $a$, $\Delta$, and $\sigma$
solve linear matrix inequalities~\eqref{eq:stbl:analysis:iid}. Let $X_{ij} =
p_i(U_{ji}\otimes A_{ij}^\top + U_{ij}\otimes A_{ij})$, where $U_{ij} \in
\mathbb{R}^{N\times N}$ denotes the $\{0,1\}$-matrix elements are all zero
except its $(i, j)$-entry. Then, from the basic property of Kronecker products
of matrices~\cite{Brewer1978}, we obtain $A^\top P + PA + \lambda P= \lambda P +
\sum_{i=1}^N\sum_{j=1}^N X_{ij}$, to which we apply
Proposition~\ref{prop:Chung}. By \eqref{eq:lmi:M:iid}, we can derive the
estimate
\begin{equation*}
\begin{aligned}
\norm{X_{ij} - E[X_{ij}]}
&=
\begin{multlined}[t][.3\textwidth]
p_i\Bigl\lVert
U_{ji}\otimes (A_{ij} - E[A_{ij}])^\top  +  \\ U_{ji} \otimes (A_{ij} - E[A_{ij}])
\Bigr\rVert
\end{multlined}
\\
&=
p_i \norm{A_{ij}- E[A_{ij}]} 
\\
&\leq \Delta.
\end{aligned}
\end{equation*}
Also, since a straightforward computation shows
\begin{equation*}
\Var(X_{ij}) = p_i^2 (U_{jj}\otimes (A_{ij}^\top A_{ij}) + 
U_{ii} \otimes (A_{ij} A_{ij}^\top)), 
\end{equation*}
we have
\begin{equation*}
\sum_{i=1}^N \sum_{j=1}^N \Var(X_{ij})
=
\sum_{i=1}^NU_{ii}\otimes \left(
\sum_{j=1}^N (p_j^2 W(A_{ji}) + p_i^2 W(A_{ij}^\top)
)
\right).
\end{equation*}
Therefore, by \eqref{eq:lmi:v:iid} and the definition \eqref{eq:defn:QiRi} of
the matrices $Q_i$ and $R_i$,
\begin{equation*}
\begin{aligned}
\Norm{\sum_{i=1}^N\sum_{j=1}^N \Var(X_{ij})}
&=\max_{1\leq i\leq N} \Norm{
p_{i}^2 \sum_{j=1}^N W(A_{ij}^\top) + \sum_{j=1}^N p_j^2 W(A_{ji})
}
\\
&=
\max_{1\leq i\leq N} \Norm{
(p_iQ_i)(p_iQ_i)^\top + R_i R_i^\top
}
\\
&\leq \sigma^2.
\end{aligned}
\end{equation*}
Now, by Proposition~\ref{prop:Chung}, we have
\begin{equation*}
\begin{multlined}[.45\textwidth]
\Pr\Bigl(\lambda_{\max}(A^\top P + PA + \lambda P) \geq\\ \lambda_{\max}(E[A]^\top P+ PE[A] + \lambda P) + a  \Bigr)
<
\kappa_{\Delta, \sigma^2} (a).
\end{multlined}
\end{equation*}
By \eqref{eq:LMI:aDeltarho} and Proposition~\ref{lem:key}, we have
$\kappa_{\Delta, \sigma^2} (a) < \epsilon$. Therefore, the inequality
\eqref{eq:lmi:main} implies that $\Pr\bigl(\lambda_{\max}(A^\top P + PA +
\lambda P) \geq  0\bigr) < \epsilon$. This shows that, with probability at least
$1-\epsilon$, we have that $A^\top P + PA + \lambda P < 0$. This means that,
with probability at least $1-\epsilon$, the system~$\Sigma$ has the Lyapunov
function $V(x) = x^\top P x$ with decay rate $\lambda$. This completes the proof
of the first part of the theorem.

Let us then prove the second statement of the theorem. Let $\epsilon_1>0$ be
arbitrary and assume that linear matrix
inequalities~\eqref{eq:stbl:analysis:iid} are solvable when $\epsilon =
\epsilon_1$ with the parameters  $p_1$, $\dotsc$, $p_N$, $a$, $\lambda$,
$\Delta$, and $\sigma$. We show that, with the same parameters,
inequalities~\eqref{eq:stbl:analysis:iid} are solvable whenever $\epsilon\geq
\epsilon_1$. By the choice of the parameters, all the linear matrix inequalities
in \eqref{eq:stbl:analysis:iid} except \eqref{eq:LMI:aDeltarho} hold true. Also,
the feasibility of \eqref{eq:LMI:aDeltarho} follows from the latter claim in
Lemma~\ref{lem:key}. This completes the proof of the theorem. 
\end{proofof}

\begin{remark}
As can be observed from the above proof, in
Theorem~\ref{thm:stabilityAnalysis:iid}, we confine our attention to the
Lyapunov functions $x^\top P x$ with diagonal $P$. This choice is motivated by
the following fact~\cite{Shorten2009}: a positive linear system $dx/dt = Ax$ is
stable if and only if it admits a Lyapunov function $x^\top P x$ with diagonal
$P \succ 0$. Also notice that we use the diagonal matrix~$P \in
\mathbb{R}^{(nN)\times (nN)}$ determined by only $N$ parameters $p_1$, $\dotsc$,
$p_N$. Using the fully parametrized diagonal matrix~$P$ would yield a less
conservative result than the theorem. In this paper, we however choose not to
present the fully parametrized case to keep presentation simple.
\end{remark}

We then give the proof of Theorem~\ref{thm:stabilityAnalysis}.

\begin{proofof}{Theorem~\ref{thm:stabilityAnalysis}}
Assume that positive numbers $p_1$, $\dotsc$, $p_N$, $a$, $\lambda$, $\sigma$,
and $\Delta$ solve linear matrix inequalities~\eqref{eq:stbl:analysis}. Let $X_i
= p_i S_i$. Then we have $A^\top P + PA + \lambda P = \lambda P +\sum_{i=1}^N
X_i$, to which we again apply Proposition~\ref{prop:Chung} as in the proof of
Theorem~\ref{thm:stabilityAnalysis:iid}. By \eqref{eq:lmi:M}, we can show that
\begin{equation*}
\begin{aligned}
\norm{X_{i} - E[X_{i}]}
&=
p_i\Norm{
e_i^\top \otimes (A_i - E[A_i])^\top  +  e_i \otimes (A_i - E[A_i])
}
\\
&\leq
2 p_i \norm{e_i \otimes (A_i - E[A_i])}
\\
&=
2p_i \norm{A_{i} - E[A_{i}]} 
\\
&
\leq \Delta.
\end{aligned}
\end{equation*}
Also, the inequality~\eqref{eq:lmi:v} immediately shows $\norm{\sum_{i=1}^N
\Var(X_i)} = \norm{\sum_{i=1}^N p_i^2 \Var(S_i)}\leq \sigma^2$. The rest of the
proof is the same that of Theorem~\ref{thm:stabilityAnalysis:iid} and hence is
omitted.
\end{proofof}

\section{Optimal Design}\label{sec:design}

Based on Theorems~\ref{thm:stabilityAnalysis:iid} and
\ref{thm:stabilityAnalysis}, in this section we study network design problems.
Roughly speaking, designing the distributions of $A$ or, the parameters $r_1$,
$\dotsc$, $r_m$, corresponds to solving matrix
inequalities~\eqref{eq:stbl:analysis:iid} or \eqref{eq:stbl:analysis} with
$E[A]$ being a \emph{variable}. This in particular makes the inequalities not
linear with respect to decision variables. To avoid the difficulty, in this
paper we employ geometric programming~\cite{Boyd2007} instead of linear matrix
inequalities. For this purpose, we place the following assumptions on the random
coefficient matrices of $\Sigma$:
\begin{enumerate}
\item[B1)] There exist random variables $A_+$ and $A_-$ satisfying $A = A_+ - A_-$
such that $E[A_+]$ is a posynomial matrix and $E[A_-]$ is a diagonal monomial
matrix in variables $r_1$, $\dotsc$, $r_m$;

\item[B2)] The cost function $R$ and the constraint functions $f_1$, $\dotsc$,
$f_p$ are posynomial functions in $r_1$, $\dotsc$, $r_m$, and $\rho$.

\item[B3)] The constraint functions $g_1$, $\dotsc$, $g_q$ are monomials in $r_1$, $\dotsc$, $r_m$, and $\rho$.
\end{enumerate}

Under this assumption, the next theorem gives a solution to the stabilization
problem for the case A1) holds: 

\begin{theorem}\label{thm:stabilization:iid}
Suppose that $\Sigma$ is positive and A1) holds true. For all $i, j\in [N]$, let
$\eta_i$, $\Phi_{ij}$ $\Psi_{ij}$ be posynomial functions in $r_1$, $\dotsc$, $r_m$ such
that
\begin{equation}\label{eq:def:Phi_i:iid}
\begin{gathered}
\esssup \norm{A_{ij} - E[A_{ij}]} \leq \eta_{ij}, 
\\
W(A_{ij}) \preceq \Phi_{ij}
,\quad 
W(A_{ij}^\top) \preceq \Psi_{ij}.
\end{gathered}
\end{equation}
Assume that the following geometric
program is feasible:
\begin{subequations}
\label{eq:gp:stbl:iid}
\begin{align}
\hspace{-.65cm}\minimize_{\substack{
a,\,\Delta,\,\sigma,\,\rho,\,\lambda \in \mathbb{R},\\
p\in\mathbb{R}^{N},\,r \in \mathbb{R}^m,\\v\in\mathbb{R}^{nN},\,w_i\in\mathbb{R}^n}}\,&1/\lambda
\\
\hspace{-.65cm}\subjectto\ 
&
(E[A_+^\top] P \!+\! P E[A_+] \!+\! aI \!+\! \lambda P)v  \leq 2PE[A_-] v,\hspace{-.5cm}
\label{eq:gp:main:iid}
\\
&
2\rho \Delta a^{-1} + 6\rho \sigma^2 a^{-2} < 3,
\label{eq:gp:s:iid}
\\
&
p_i\eta_{ij}(r) \leq \Delta,
\label{eq:gp:M:iid}
\\
&
\left(p_{i}^2 \sum_{j=1}^N \Psi_{ij}(r) + \sum_{j=1}^N p_j^2 \Phi_{ji}(r)\right)w_i \leq \sigma^2 w_i, \hspace{-.4cm}
\label{eq:gp:v:iid}
\\
&R(r, \epsilon(\rho)) \leq \bar R,\label{eq:gp:cost}\\
&f_k(r, \epsilon(\rho)) \leq 1,\ g_\ell(r, \epsilon(\rho))=1.
\label{eq:gp:constraints}
\end{align}
\end{subequations}
Let $\rho^\star$, $r^\star$, and $\lambda^\star$ be the optimal solution of this
optimization problem. Define $\epsilon^\star = nN/e^{-\rho^\star}$. Then, with
probability at least $1-\epsilon^\star$, the system $\Sigma$ with parameters
$r^\star$ is stable with decay rate $\lambda^\star$.
\end{theorem}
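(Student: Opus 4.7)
The plan is to show that any feasible point of the geometric program \eqref{eq:gp:stbl:iid} automatically solves the linear matrix inequalities \eqref{eq:stbl:analysis:iid} of Theorem~\ref{thm:stabilityAnalysis:iid} at the unreliability level $\epsilon^\star = nN e^{-\rho^\star}$. Once that translation is established, Theorem~\ref{thm:stabilityAnalysis:iid} will directly deliver the claimed probabilistic stability conclusion.

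Two of the four LMI conditions come for free. The scalar constraint \eqref{eq:gp:s:iid} is literally statement~\ref{item:gp}) of Lemma~\ref{lem:key}, so Lemma~\ref{lem:key} yields \eqref{eq:LMI:aDeltarho}; and the componentwise bound \eqref{eq:gp:M:iid}, combined with the hypothesis $\esssup\norm{A_{ij}-E[A_{ij}]}\leq \eta_{ij}(r)$ from \eqref{eq:def:Phi_i:iid}, immediately gives \eqref{eq:lmi:M:iid}. The substantive work is to promote the componentwise posynomial inequalities \eqref{eq:gp:main:iid} and \eqref{eq:gp:v:iid} into the semidefinite inequalities \eqref{eq:lmi:main} and \eqref{eq:lmi:v:iid}. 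My main tool will be the following Perron--Frobenius-type fact: \emph{if $M$ is a symmetric Metzler (or symmetric entrywise nonnegative) matrix and $Mv \leq cv$ componentwise for some $v>0$ and real $c$, then $M \preceq cI$.} Indeed, $M$ admits a nonnegative eigenvector $u\geq 0$ with $Mu=\lambda_{\max}(M)u$, and then $0\geq u^\top(M-cI)v = (\lambda_{\max}(M)-c)\,u^\top v$ with $u^\top v>0$, so $\lambda_{\max}(M)\leq c$; symmetry upgrades this to the claimed matrix inequality.

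To apply this fact to \eqref{eq:gp:main:iid}, I will invoke B1) to write $A=A_+-A_-$ with $E[A_-]$ diagonal, so that the diagonality of $P$ allows me to collapse $E[A_-]^\top P + PE[A_-] = 2PE[A_-]$ and rewrite \eqref{eq:gp:main:iid} as $(E[A]^\top P + PE[A] + aI + \lambda P)v \leq 0$; the matrix on the left is symmetric Metzler because $E[A]$ is Metzler by positivity of $\Sigma$ and $P$ is diagonal positive, so the fact delivers \eqref{eq:lmi:main}. Applied to \eqref{eq:gp:v:iid}, for which the bracketed matrix is symmetric and entrywise nonnegative since each $\Psi_{ij}(r),\Phi_{ji}(r)$ is a posynomial matrix, the same fact yields $p_i^2\sum_j \Psi_{ij}(r) + \sum_j p_j^2 \Phi_{ji}(r) \preceq \sigma^2 I$; the semidefinite upper bounds $W(A_{ij}^\top)\preceq \Psi_{ij}$ and $W(A_{ji})\preceq \Phi_{ji}$ from \eqref{eq:def:Phi_i:iid} then transfer to $p_i^2\sum_j W(A_{ij}^\top) + \sum_j p_j^2 W(A_{ji}) \preceq \sigma^2 I$, and a Schur complement repackages this as \eqref{eq:lmi:v:iid} with the $Q_i,R_i$ of \eqref{eq:defn:QiRi}. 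Constraints \eqref{eq:gp:cost}--\eqref{eq:gp:constraints} directly enforce the side conditions of Problem~\ref{prb:design:stbl}, and Theorem~\ref{thm:stabilityAnalysis:iid} closes the argument.

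The main obstacle I anticipate is the conversion of \eqref{eq:gp:main:iid}. One must carefully verify that the Metzler/symmetry structure of $E[A]^\top P + PE[A] + aI + \lambda P$ required by the Perron--Frobenius step survives the $A_+/A_-$ splitting, and that the diagonal form of $E[A_-]$ imposed by B1) is precisely what allows the two ``negative'' contributions to collapse into the single $2PE[A_-]v$ on the right-hand side of the posynomial constraint. Once these structural checks are in hand, the remaining manipulations are routine.
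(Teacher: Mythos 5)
Your proposal is correct and follows essentially the same route as the paper's own (much terser) proof: translate each constraint of the geometric program into the corresponding linear matrix inequality of Theorem~\ref{thm:stabilityAnalysis:iid} --- using Lemma~\ref{lem:key} for \eqref{eq:LMI:aDeltarho}, the Metzler structure of $E[A]^\top P + PE[A] + aI + \lambda P$ together with Perron--Frobenius for \eqref{eq:lmi:main}, and a Schur complement for \eqref{eq:lmi:v:iid} --- and then invoke that theorem. Your write-up actually supplies details the paper omits (the explicit eigenvector argument for the Metzler step, the collapse $E[A_-]^\top P + PE[A_-] = 2PE[A_-]$ via diagonality, and the $Q_iQ_i^\top$, $R_iR_i^\top$ identification), all of which are sound.
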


\begin{proof}
Conditions B1)--B3) guarantee the optimization problem~\eqref{eq:gp:stbl:iid} to
be a geometric program. Assume that the optimization problem
\eqref{eq:gp:stbl:iid} is feasible. We show that the linear matrix
inequalities~\eqref{eq:stbl:analysis:iid} are feasible. The
inequality~\eqref{eq:gp:main:iid} implies that $(E[A]^\top P + P E[A] + aI +
\lambda P)v \leq 0$. Since the matrix $E[A]^\top P + P E[A] + aI + \lambda P$ is
Metzler by the positivity of $\Sigma$, the Perron-Frobenius theory and
\eqref{eq:gp:main:iid} show that the matrix is negative semi-definite, i.e., the
linear matrix inequality~\eqref{eq:lmi:main} holds. By
Proposition~\ref{lem:key}, the inequality~\eqref{eq:gp:s:iid} equivalently
implies \eqref{eq:LMI:aDeltarho}. Also, the inequality~\eqref{eq:gp:M:iid} and
the definition of $\eta_{ij}$ show \eqref{eq:lmi:M:iid}. Furthermore, the
inequality \eqref{eq:gp:v:iid} and \eqref{eq:def:Phi_i:iid} imply
\eqref{eq:lmi:v:iid}. Hence, by Theorem~\ref{thm:stabilityAnalysis}, we obtain
the conclusion.
\end{proof}

Then, the next theorem gives a solution for the stabilization problem under A2):

\begin{theorem}
Suppose that $\Sigma$ is positive and A2) holds true. For each $i\in [N]$, let
$\eta_i$ and $\Phi_i$ be posynomial functions in $r$ such that
\begin{equation*} 
\esssup \norm{A_{i} - E[A_{i}]} \leq \eta_{i}, \quad 
\Var(S_i) \preceq \Phi_i(r), 
\end{equation*}
for every feasible $r$. Assume that the following geometric
program is feasible:
\begin{equation*}
\begin{aligned}
\minimize_{\substack{
a,\,\Delta,\,\sigma,\,\rho,\,\lambda \in \mathbb{R},\\
p\in\mathbb{R}^{N},\,r \in \mathbb{R}^m,\,v, w_i\in\mathbb{R}^{nN}}}\ &1/\lambda
\\
\subjectto\ \ \ \ \ \ 
&
\text{\eqref{eq:gp:main:iid}, \eqref{eq:gp:s:iid}, \eqref{eq:gp:cost}, and \eqref{eq:gp:constraints}},
\\
&
2 p_i \eta_i(r) \leq \Delta,
\\
&
\left(\sum_{i=1}^N p_i^2 \Phi_i(r)\right) w_i \leq \sigma^2 w_i.
\end{aligned}
\end{equation*}
Let $\rho^\star$, $r^\star$, and $\lambda^\star$ be the solutions of this
optimization problem. Define $\epsilon^\star = nN/e^{-\rho^\star}$. Then, with
probability at least $1-\epsilon^\star$, the system $\Sigma$ with parameter
$r^\star$ is stable with decay rate $\lambda^\star$.
\end{theorem}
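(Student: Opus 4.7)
My plan is to mirror the proof of Theorem~\ref{thm:stabilization:iid} almost line by line, translating each constraint of the geometric program into the corresponding linear matrix inequality in \eqref{eq:stbl:analysis}, and then invoking Theorem~\ref{thm:stabilityAnalysis}. First I would check that conditions B1)--B3), together with the fact that $\eta_i$ and the entries of $\Phi_i$ are posynomials in $r$, guarantee that the stated optimization problem is indeed a geometric program: after using B1) to split $E[A] = E[A_+] - E[A_-]$ and dividing \eqref{eq:gp:main:iid} by the monomial term $2 P E[A_-] v$ on the right-hand side, every inequality constraint is of the form ``posynomial $\leq 1$'' and every equality is ``monomial $= 1$''.

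Next, assuming feasibility with optimum $(\rho^\star, r^\star, \lambda^\star)$ and corresponding $p, a, \Delta, \sigma, v, w$, I would produce the four LMIs in \eqref{eq:stbl:analysis} one by one. The inequality \eqref{eq:gp:main:iid} combined with positivity of $\Sigma$ gives \eqref{eq:lmi:main} in the usual Perron--Frobenius way: the matrix $E[A]^\top P + P E[A] + aI + \lambda P$ is Metzler because $E[A]$ is Metzler, and the existence of a positive eigenvector-type witness $v > 0$ with the matrix acting nonpositively on $v$ forces the Metzler matrix to be negative semi-definite. The inequality \eqref{eq:gp:s:iid} yields \eqref{eq:LMI:aDeltarho} directly via the equivalence \ref{item:gp})~$\Leftrightarrow$~\ref{item:lmi}) in Lemma~\ref{lem:key}. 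The bound $2 p_i \eta_i(r) \leq \Delta$ together with the posynomial majorization $\esssup \norm{A_i - E[A_i]} \leq \eta_i(r)$ gives \eqref{eq:lmi:M}.

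The last and most delicate step is deriving \eqref{eq:lmi:v}. Applying Schur complement to \eqref{eq:lmi:v} with respect to its leading $\sigma I$ block, it is equivalent to $\sigma^2 I \succeq \sum_{i=1}^N p_i^2 \Var(S_i)$, so it suffices to prove this scalar PSD inequality. Because each entry of $\Phi_i(r)$ is a posynomial in $r$, the symmetric matrix $M(r) = \sum_{i=1}^N p_i^2 \Phi_i(r)$ has nonnegative entries for positive $r$; the geometric program constraint supplies a positive $w$ with $M(r) w \leq \sigma^2 w$, which by Perron--Frobenius on the symmetric nonnegative matrix $M(r)$ implies $\lambda_{\max}(M(r)) \leq \sigma^2$, i.e.\ $M(r) \preceq \sigma^2 I$. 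Chaining this with $\Var(S_i) \preceq \Phi_i(r)$ gives the desired bound. Once all four LMIs of \eqref{eq:stbl:analysis} are in hand, Theorem~\ref{thm:stabilityAnalysis} yields the claimed probabilistic stability with decay rate $\lambda^\star$ and unreliability $\epsilon^\star = nN e^{-\rho^\star}$.

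The main obstacle I anticipate is the last step: one must justify passing from the Perron--Frobenius-style componentwise inequality on $M(r) w \le \sigma^2 w$ to the spectral inequality $M(r) \preceq \sigma^2 I$, which requires both the entrywise nonnegativity of $\Phi_i(r)$ and the symmetry of $M(r)$; the former comes from B1)--B3) and the posynomial structure of $\Phi_i$, while the latter is inherited from $\Var(S_i) = \Var(S_i)^\top$ provided one chooses each $\Phi_i$ symmetric, which one may always do without loss of generality by symmetrizing a posynomial upper bound.
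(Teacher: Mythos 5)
Your proposal is correct and follows essentially the same route as the paper, which simply states that the proof is ``almost the same as the proof of Theorem~\ref{thm:stabilization:iid}'': each geometric-program constraint is mapped to the corresponding LMI in \eqref{eq:stbl:analysis} (Perron--Frobenius for \eqref{eq:lmi:main}, Lemma~\ref{lem:key} for \eqref{eq:LMI:aDeltarho}, the posynomial majorants for \eqref{eq:lmi:M} and \eqref{eq:lmi:v}), and then Theorem~\ref{thm:stabilityAnalysis} is invoked. The extra care you take in justifying the passage from $M(r)w\leq\sigma^2 w$ to $M(r)\preceq\sigma^2 I$ via symmetry and entrywise nonnegativity is a detail the paper leaves implicit, and it is handled correctly.
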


\begin{proof}
The proof is almost the same as the proof of Theorem~\ref{thm:stabilization:iid}
and hence is omitted.
\end{proof}

\section{Numerical Example}\label{sec:example}

In this section, we illustrate the obtained results using a famous
disease-spreading model in epidemiology called the heterogeneous networked
susceptible-infected-susceptible model~\cite{VanMieghem2009a}. In the model, the
evolution of the disease in a networked population whose graph has the adjacency
matrix $A_G = [a_{ij}]_{i, j} \in \{0, 1\}^{N\times N}$ is described as
\begin{equation}\label{eq:SIS}
\frac{dx_i}{dt} = -\delta_i x_i +  \sum_{j=1}^N \beta_{ij}a_{ij}x_j, \quad
i\in [N], 
\end{equation}
where $\beta_{ij}$ and $\delta_i$ are positive constants. The variable $x_i(t)$
represents the probability that node $i$ is infected at time~$t$. The constant
$\beta_{ij}$, called the transmission rate, indicates the rate at which the
infection is transmitted to node $i$ from its infected neighbor~$j$. The
constant $\delta_i$, called the recovery rate, indicates the rate at which the
infection is cured. Define $B = [\beta_{ij}a_{ij}]_{i, j} \in
\mathbb{R}^{N\times N}$ and $D = \diag(\delta_1, \dotsc, \delta_N)$. Then, the
dynamics in \eqref{eq:SIS} are written by the differential equation
\begin{equation}\label{eq:B-Dsys}
\frac{dx}{dt} = (B-D) x, 
\end{equation}
whose stability indicates that the infection will be eradicated asymptotically.

Assume that we have an access to preventative resource that can change the
natural transmission rate, denoted by~$\bar \beta_{ij}$ to another rate~$\ubar
\beta_{ij}$ smaller than $\bar{\beta}_{ij}$. We consider the situation that,
though the preventative resource is expected to be applied to all the possible
edges in the network, only a fraction of them in fact take it. Let us model this
uncertainty as
\begin{equation*}
\beta_{ij} = 
\begin{cases}
\bar \beta_{ij} & \text{with probability $r_{ij}$},
\\
\ubar \beta_{ij} & \text{with probability $1- r_{ij}$},
\end{cases}
\end{equation*}
where $r_{ij} \in [0, 1]$ is a constant. We assume that the events of resource
being applied to edge $(i, j)$ are independent for all the edges.

We remark that this problem setting is motivated by imperfect vaccine coverage
commonly observed in human networks~\cite{Metcalf2014}. This problem is studied
in Magpantay et al.~\cite{Magpantay2014} under the assumption that $A$ is the
complete graph, i.e., the graph in which every pair of distinct vertexes is
connected by a unique edge. On the other hand, in this paper we allow the
adjacency matrix $A_G$ of the network to be arbitrary. We also remark that a
similar problem is considered in \cite{Preciado2014}, where the authors directly
design the values of transmission rates over an interval. In this paper, we are
considering a more realistic scenario when we can only give or not give only
one type of preventative resource.

\subsection{Stability Analysis}

We first solve the stability analysis problem. For simplicity, we assume that
all the $r_{ij}$ share the same value $r$. Also, we assume that the share the
same natural transmission rate and the transmission rate after prevention as
$\bar{\beta}_{ij} = \bar{\beta}$ and $\ubar{\beta}_{ij} = \ubar{\beta}$ for
positive numbers $\ubar{\beta}$ and $\bar \beta$. Then, we can find $Q_i$ and
$R_i$ given in \eqref{eq:defn:QiRi} by using $W(A_{ii}) = 0$ and $W(A_{ij}) =
r(1-r)(\bar{\beta} - \ubar{\beta})^2$ if $i\neq j$.

We let $A_G$ be a realization of the directed Erd\H{o}s-R\'enyi graph with $N =
200$ nodes and diedge probability $p = 0.05$. We use the parameters $\delta=1$,
$\bar{\beta} = 1.1/\lambda_{\max}(A)$, and $\ubar{\beta} =
0.1/\lambda_{\max}(A)$ for all $i, j\in [N]$. Notice that, since the matrix
$\bar \beta A - D$ has a positive eigenvalue $\bar{\beta}\lambda_{\max}(A) -
\delta = 0.1$, the system~\eqref{eq:B-Dsys} is not stable if no preventative
resource is applied to any edge. For various values of $\lambda$ and $r$, we
calculate the minimum unreliability rate~$\epsilon^\star$ given
in~\eqref{eq:minimumUnreliability}. Fig.~\ref{fig:analysis} shows the obtained
values of $\epsilon^\star$. We can see that, the smaller the non-prevention
rate~$r$ is, with the larger probability we can guarantee the stability of
$\Sigma$ with the larger decay rate.

\begin{figure}
\centering
\includegraphics[width=.475\textwidth]{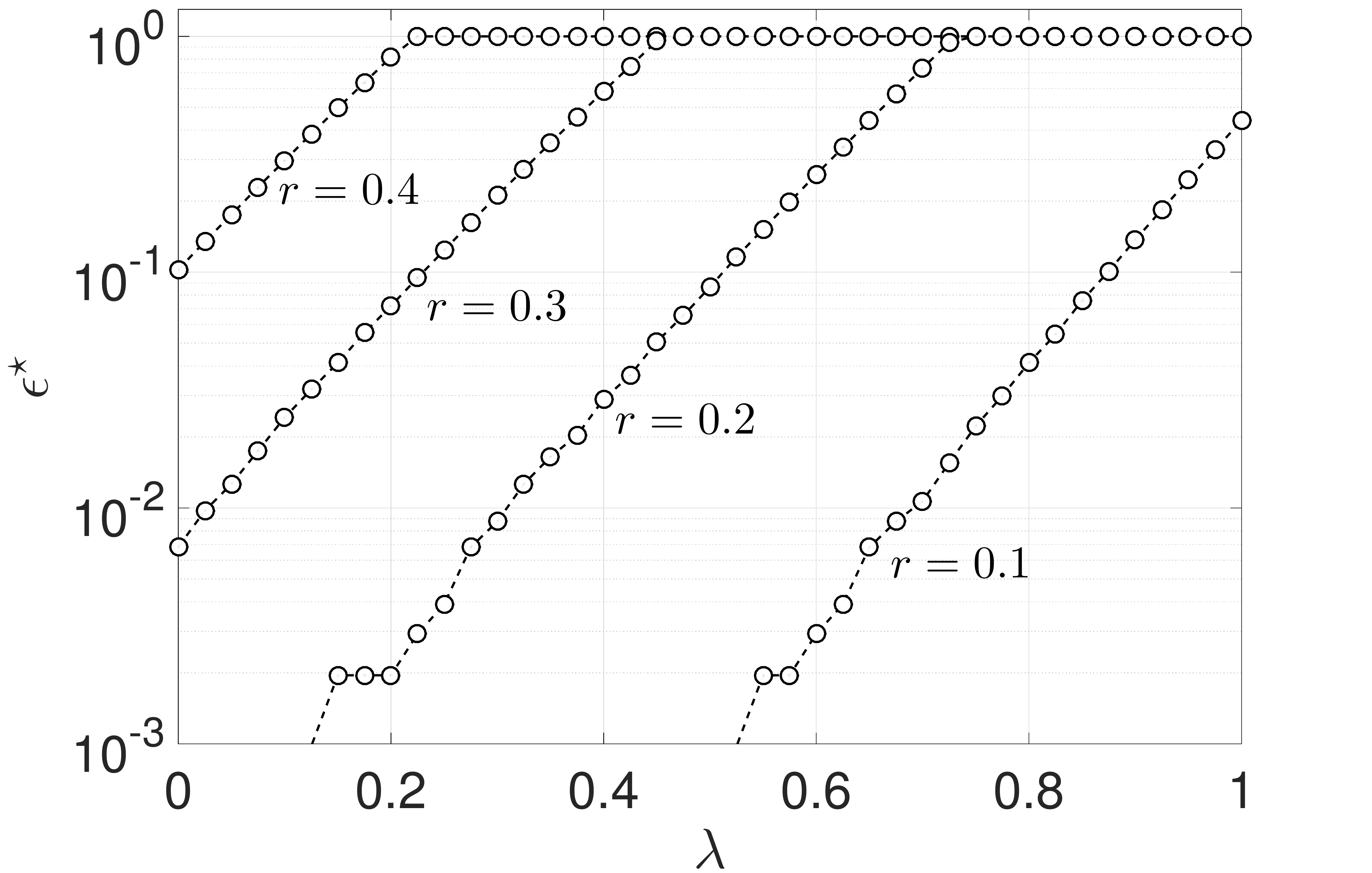}
\caption{$\epsilon^\star$ versus $\lambda$ for $r=0.1$, $0.2$, $0.3$, and $0.4$}
\label{fig:analysis}
\end{figure}

\subsection{Network Design}

Then we solve the network design for stabilization, i.e.,
Problem~\ref{prb:design:stbl}, using Theorem~\ref{thm:stabilization:iid}. Here
we assume that $r_{ij}$ depends only on $i$, i.e., $r_{ij} = r_i$ for all $j \in
[N]$. Under this assumption, we can choose the posynomial functions $\eta_{ij}$,
$\Phi_{ij}$, and $\Psi_{ij}$ satisfying \eqref{eq:def:Phi_i:iid} as $\eta_{ij} =
a_{ij} (\bar{\beta} - \ubar{\beta})$ and $\Phi_{ij} = \Psi_{ij} = a_{ij} r_i
(\bar{\beta} - \ubar{\beta})^2$ because we have
\begin{equation*}
\norm{A_{ij} - E[A_{ij}]} =
a_{ij} \max(r_i, 1-r_i)(\bar{\beta} - \ubar{\beta})
\end{equation*}
and $W(A_{ij}) = W(A_{ij}^\top) = a_{ij} r_i(1-r_i)(\bar{\beta} - \ubar{\beta})^2$.

We put the constraint $\epsilon \leq 0.2$, i.e., we require that the resulting
optimal parameter $r^\star$ guarantees stability of with probability at least
$0.8$. This constraint is equivalent to the monomial constraint:
\begin{equation*}
f_1 = \frac{\log(N/(0.2))}{\rho}\leq 1.
\end{equation*}
We use the cost function $R = \sum_{i=1}^N ({1}/{r_i})$. With these parameters,
we solve the geometric program \eqref{eq:gp:stbl:iid} and find the optimal
non-prevention probabilities~$r_1^\star$, $\dotsc$, $r_N^\star$.
Fig.~\ref{fig:synthesis} shows the value of the obtained $r^\star$ versus the
in-degree of the nodes. From the figure we can see that, the edges pointing
toward a node with the larger in-degree should receive protection resource with
the larger probability.

\begin{figure}
\centering
\includegraphics[width=.475\textwidth]{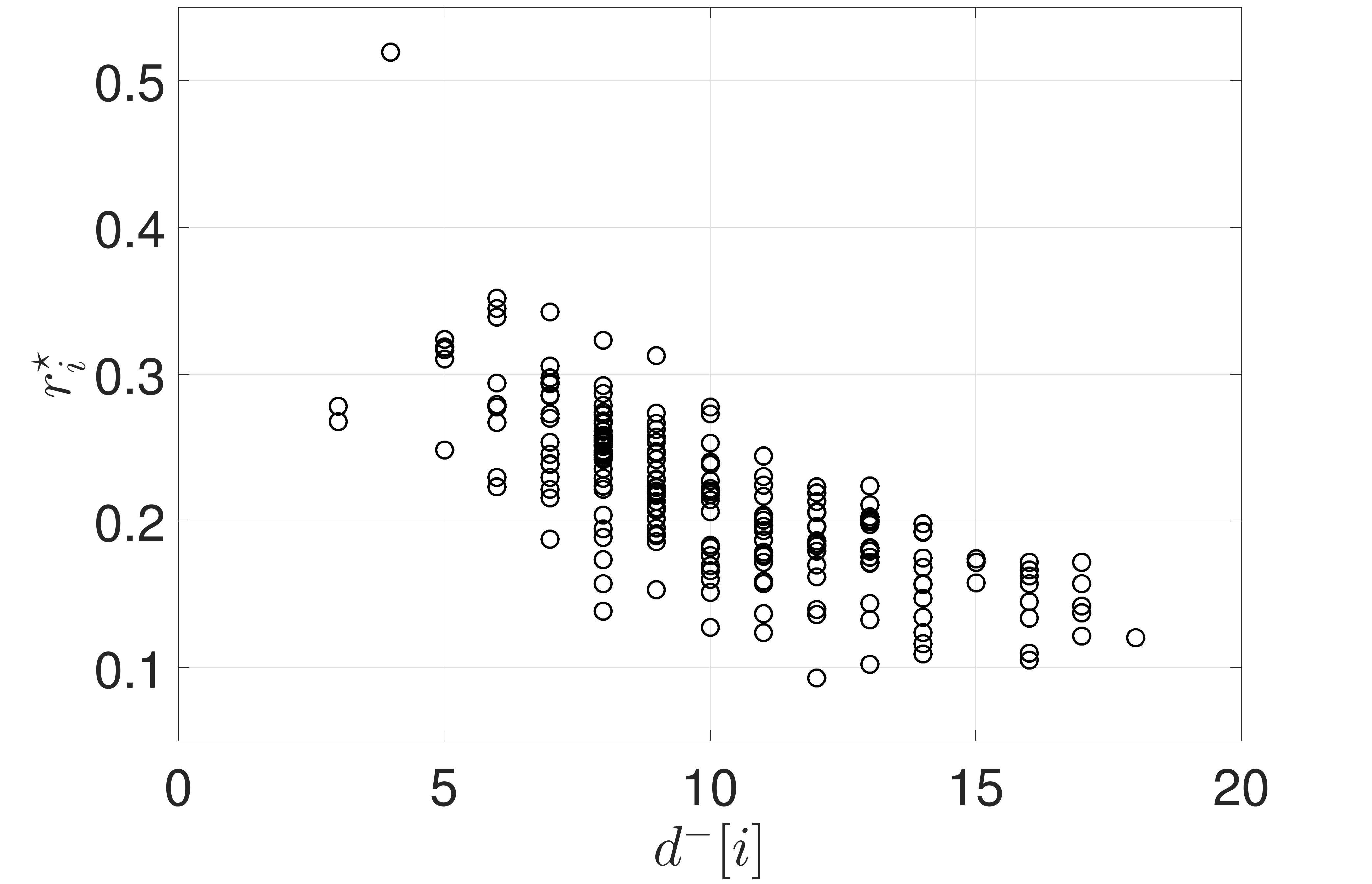}
\caption{In-degrees and optimal probabilities}
\label{fig:synthesis}
\end{figure}

\section{Conclusion}

We have studied the stability of the networks of positive linear
systems subject to time-invariant and random uncertainty. We have first presented
a collection of linear matrix inequalities to study the stability of the whole network around
the origin with a given probability and a decay rate. Based on this result, we have
then proposed a convex optimization framework to optimally design the
parameters of the probability distribution that describes the uncertainty of the
system. We have illustrated our results using a networked
susceptible-infected-susceptible viral spreading model.



\end{document}